\newcommand{\End}{\textup{End}}
\newcommand{\A} {\mathbb{A}}
\newcommand{\R}{\mathbb{R}}
\newcommand{\Z}{\mathbb{Z}}
\renewcommand{\P}{\mathbb{P}}
\newcommand{\dist}{\mbox{dist}}
\renewcommand{\L}{\mathcal{L}}
\newtheorem{theorem}{Theorem}[section]
\newtheorem{proposition}[theorem]{Proposition}
\newtheorem{conjecture}[theorem]{Conjecture}
\newtheorem{definition}[theorem]{Definition}
\newtheorem{remark}[theorem]{Remark}
\newif\ifhascomments \hascommentstrue
  \newcommand{\matt}[1]{{\color{red}[[\ensuremath{\spadesuit\spadesuit\spadesuit} #1]]}}
  \newcommand{\david}[1]{{\color{red}[[\ensuremath{\star\star\star} #1]]}}
  \newcommand{\chris}[1]{{\color{red}[[\ensuremath{\clubsuit\clubsuit\clubsuit} #1]]}}
  \newcommand{\matt}[1]{}
  \newcommand{\david}[1]{}
  \newcommand{\chris}[1]{}
\title{Curves of best approximation on wonderful varieties}
\author{Christopher Manon, David McKinnon, and Matthew Satriano}
\date{August 2022}
\begin{document}

\begin{abstract}
    We give an unconditional proof of the Coba conjecture for wonderful compactifications of adjoint type for semisimple Lie groups of type $A_n$.  We also give a proof of a slightly weaker conjecture for wonderful compactifications of adjoint type for arbitrary Lie groups.
\end{abstract}
    
	\maketitle
	
	\section{Introduction}
	
	The distribution of rational points on algebraic varieties has intensely interested the mathematical community for millennia, though it has not always been phrased in such modern terms.  In this paper, we address the problem of how well a rational point on an algebraic variety can be approximated by nearby rational points.

    The driving force behind this study is a conjecture of the second author from 2007 (\cite{M}, Conjecture 2.7, reproduced here as Conjecture~\ref{conj:ratcurve}).  It says, roughly speaking, that as long as there are enough rational points near a rational point $P$, then there is a curve that contains the best rational approximations to $P$.  This curve is called a curve of best approximation to $P$.

    This conjecture has been verified in a large number of cases, and is known to follow from Vojta's Conjecture in a much larger number of cases.  In \cite{LMS}, the authors describe a framework for deducing from Vojta's Conjecture a slightly weaker conjecture (Conjecture~\ref{conj:nearratcurve}) which states there is a rational curve $C$ through $P$ such that rational approximations to $P$ along $C$ are better than any Zariski dense sequence of rational points approximating $P$.
    A Noetherian induction argument can deduce the stronger Conjecture~\ref{conj:ratcurve} from the weaker Conjecture~\ref{conj:nearratcurve} for general varieties (not necessarily smooth), and in dimension two the conjectures are equivalent.  Results of Monahan and the third author (\cite{MoSat}) as well as results of the second and third authors (\cite{MS}) deduce the weaker conjecture from Vojta for horospherical and toric varieties, respectively. Casta\~neda (\cite{Ca}), McKinnon (\cite{M} and \cite{MC}), and McKinnon-Roth (\cite{MR}) prove the conjecture unconditionally for various rational projective surfaces and all cubic hypersurfaces of dimension at least two.   

    In this paper, we prove Conjecture~\ref{conj:nearratcurve} {\em unconditionally} for wonderful compactifications $X$ of adjoint type of semisimple Lie groups, for an arbitrary nef divisor class.  This represents the first unconditional proof of cases of the conjecture in arbitrarily high dimension and degree.  The proof proceeds by finding a curve $C$ of small anticanonical degree through $P$.  We compute the dimension of the space of global sections of a generator of the nef cone using the representation theory of the group $G$.  The Liouville type theorem (Theorem~3.3) from \cite{MR} then gives a lower bound for $\alpha$ for Zariski dense sequences in terms of this dimension.  The proof concludes by showing the degree of $C$ is low enough to contain a sequence whose approximation constant is lower than the lower bound obtained from the Liouville-type theorem for dense sequences.

    In Section \ref{prelim}, we describe some preliminaries necessary for the proof, including a brief discussion of the geometry of wonderful compactifications and definitions of the basic notions of Diophantine approximation.  In Section \ref{mainthm} we prove the main theorem.  Section \ref{tables} contains tables describing the dimensions of the spaces of global sections for generators of the nef cone and the intersection numbers of the longest root curve with those same generators.

    \section{Preliminaries}\label{prelim}

    \subsection{The Coba Conjecture}\label{sec-coba}

For the rest of the paper we fix a number field $k$ and a place $v$ of $k$.  The place $v$ may be archimedean or not.

The first step in stating the Coba conjecture is to define the approximation constant.  We first define the approximation constant for a sequence $\{x_i\}$ of $k$-rational points converging ($v$-adically) to $P$.  The tension in the definition is between the height of the approximations $x_i$ and the distance of the $x_i$ to $P$.  This tension is measured by the product $H(x_i)\dist(x_i,P)^\gamma$.  Setting $\gamma=0$ will make this product approach infinity, while increasing $\gamma$ will make that product smaller for large enough $i$.  The smallest $\gamma$ that makes the product converge to zero is the approximation constant.  

More precisely, we have the following definition.

\begin{definition}\label{dfn:seqappconst}
	Let $X$ be a projective variety, $P\in X(\overline{k})$, $L$ a $\mathbb{Q}$-Cartier divisor on $X$.  For any sequence $\{x_i\}\subseteq X(k)$ of distinct
	points with $\dist(P,x_i)\rightarrow 0$, we set
	$$A(\{x_i\}, L) = \left\{{
		\gamma\in\R \mid
		\dist(P,x_i)^{\gamma} H_{L}(x_i)\,\,\mbox{is bounded from above}
	}\right\}.
	$$
	If $\{x_i\}$ does not converge to $P$ then we set $A(\{x_i\},L)=\emptyset$.
\end{definition}

It follows easily from the definition that if $A(\{x_i\}, L)$ is nonempty then it is an
interval unbounded to the right.  That is, if $\gamma\in A(\{x_i\},L)$ then $\gamma+\delta\in A(\{x_i\},L)$ for any $\delta>0$ -- if a choice of $\gamma$ makes the product converge to zero, then making $\gamma$ bigger won't change that.

The approximation constant is the left endpoint of the interval $A(\{x_i\},L)$.

\begin{definition}
	For any sequence $\{x_i\}$ we set $\alpha(\{x_i\},P,L)$ to be the infimum of $A(\{x_i\},L)$.
	In particular, if $A(\{x_i\},L)=\emptyset$ then $\alpha(\{x_i\},P,L)=\infty$.
	We call $\alpha(\{x_i\},P,L)$ the approximation constant of $\{x_i\}$ with respect to $L$.
\end{definition}

This allows us to define the approximation constant of $P$.  

\begin{definition}\label{dfn:approx}
	The approximation constant $\alpha(P,L)$ of $P$ with respect to
	$L$ is defined to be the infimum of all approximation constants of
	sequences of points in $X(k)$ converging $v$-adically to $P$.
	If no such sequence exists, we set $\alpha(P,L)=\infty$.
\end{definition}

With the definition of the approximation constant in hand -- and the concurrent quantitative knowledge of how good an approximation is -- we can state the conjectures.
    
       \begin{conjecture}[Coba conjecture]\label{conj:ratcurve}
           Let $X$ be a smooth variety defined over a number field $k$, and $P\in X$ a $k$-rational point.  Let $A$ be an ample line bundle on $X$.  If $\alpha(P,A)<\infty$, then there is a curve of best $A$-approximation to $P$.  In other words, there is a curve $C$ such that
            \[\alpha(P,A|_C)=\alpha(P,A)\]
            where the first $\alpha$ is computed on $C$, and the second is computed on $X$.
        \end{conjecture}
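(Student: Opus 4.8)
\emph{Proof strategy.} The plan is twofold: to reduce the Coba conjecture to the weaker near-rational-curve conjecture (Conjecture~\ref{conj:nearratcurve}), in the version valid for arbitrary, possibly singular, varieties, by a Noetherian induction on closed subvarieties; and to establish that weaker statement directly for the variety at hand. Only the second step carries substantive content, and for a wonderful compactification of adjoint type it is carried out in Section~\ref{mainthm}; it is what yields the first unconditional instances of the conjecture in arbitrarily high dimension.

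For the reduction, I would first record two elementary facts about the approximation constant. If $Y \subseteq X$ is a closed subvariety with $P \in Y$, then $\alpha(P, A|_Y) \ge \alpha(P, A)$, since a sequence on $Y$ is a sequence on $X$ and Weil heights are functorial up to $O(1)$; and $\alpha(P, A|_{Y'}) \ge \alpha(P, A|_Y)$ whenever $Y' \subseteq Y$. Writing $\alpha_{\mathrm{dense}}(P, A)$ for the infimum of $\alpha(\{x_i\}, P, A)$ over Zariski-dense sequences converging $v$-adically to $P$, one then obtains
\[
\alpha(P, A) \;=\; \min\!\big(\alpha_{\mathrm{dense}}(P, A),\ \inf_{Y} \alpha(P, A|_Y)\big),
\]
the inner infimum taken over proper closed subvarieties $Y \subsetneq X$ through $P$, because any approximating sequence that is not Zariski dense has, after passing to a subsequence, Zariski closure inside such a $Y$.

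Now I would run the induction: assume the Coba conclusion for every proper closed subvariety of $X$ through $P$, and assume Conjecture~\ref{conj:nearratcurve} for $X$. If $\alpha(P, A) = \alpha_{\mathrm{dense}}(P, A)$, Conjecture~\ref{conj:nearratcurve} furnishes a rational curve $C$ through $P$ with $\alpha(P, A|_C) \le \alpha_{\mathrm{dense}}(P, A) = \alpha(P, A)$, and the reverse inequality $\alpha(P, A|_C) \ge \alpha(P, A)$ recorded above makes $C$ a curve of best $A$-approximation. Otherwise $\alpha(P, A) < \alpha_{\mathrm{dense}}(P, A)$, and the displayed identity forces the infimum over proper subvarieties to equal $\alpha(P, A)$; here one uses that a sequence realizing $\alpha(P, A)$ exists (a routine diagonalization, via the identity $\alpha(\{x_i\}, P, A) = \limsup_i \log H_A(x_i) / (-\log \dist(P, x_i))$), that such a sequence cannot be Zariski dense, and that a subsequence of it lies in a proper closed $Y_0 \ni P$ with $\alpha(P, A|_{Y_0}) = \alpha(P, A)$; the inductive hypothesis for $Y_0$ with the ample class $A|_{Y_0}$ then produces a curve $C \subseteq Y_0$ with $\alpha(P, A|_C) = \alpha(P, A)$. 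The structurally essential subtlety is that $Y_0$ may be singular, which is exactly why the reduction has to use the general-variety form of Conjecture~\ref{conj:nearratcurve}.

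The substantive obstacle is Conjecture~\ref{conj:nearratcurve} itself. For a wonderful compactification $X$ of adjoint type, the approach is: exhibit an explicit low-degree rational curve $C$ through $P$ --- the longest root curve, built from the $\SL_2$ of the longest root of $G$ --- and control the intersection number $A \cdot C$; compute $\dim H^0(X, L)$ for $L$ a generator of the nef cone using the representation theory of $G$; feed this dimension into the Liouville-type theorem (Theorem~3.3 of \cite{MR}) to obtain a lower bound for $\alpha(\{x_i\}, P, A)$ valid for every Zariski-dense sequence; and verify that the approximation constant along $C$, which is governed by $A \cdot C$, falls below that bound. The crux will be this last numerical comparison --- the degree of the longest root curve against the Liouville bound controlled by $\dim H^0(X, L)$ --- which is precisely what the tables of Section~\ref{tables} record across the Lie types. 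When, moreover, the same Liouville bound, applied also on the proper closed subvarieties through $P$, exceeds the approximation constant along $C$, the Noetherian induction above becomes unnecessary and one obtains the full Coba conjecture outright; this is what occurs in type $A_n$.
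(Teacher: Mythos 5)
The statement you are proving is a conjecture, and the paper does not prove it in general --- nor does your proposal. Your reduction step requires Conjecture~\ref{conj:nearratcurve} not merely for $X$ but for every proper closed (possibly singular) subvariety $Y_0$ through $P$ arising in the Noetherian induction, with the ample class $A|_{Y_0}$; that input is exactly as open as the original conjecture, and nothing in the Liouville machinery you invoke supplies it. The Liouville bound of Proposition~\ref{prop:lower-bound-alpha-Zar-dense} hinges on computing $h^0$ of nef generators on the wonderful compactification via representation theory of $G$; there is no analogous control of $h^0$ on an arbitrary subvariety $Y_0$, so your closing suggestion of ``applying the same Liouville bound on the proper closed subvarieties'' is an unsupported step, not a routine extension. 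Consequently your argument, like the paper's, only yields the near-Coba statement for wonderful compactifications of adjoint type (this part of your sketch --- longest root curve, $h^0(X,\L_\omega)\geq\dim\End(V_\omega)$, Theorem~3.3 of \cite{MR}, and the numerical comparison recorded in the tables --- does match Steps 1--3 of the paper's Theorem~\ref{adjoint}), and not the conjecture as stated for arbitrary smooth $X$.

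Your proposed mechanism for upgrading to the full Coba conjecture in type $A_n$ also diverges from what the paper actually does, and the paper's route is the one that works. In Step 4 the paper uses that in types $A_n$ and $C_n$ the root curve satisfies $C_\theta\cdot D_\omega=1$ for every colour $D_\omega$, so Proposition~2.14(b) of \cite{MR} makes $C_\theta$ a curve of best approximation for each colour except for sequences contracted by that colour; since the colours are basepoint free and their morphisms are isomorphisms on the open orbit, no approximating sequence through a point of the open orbit is contracted, and the conclusion for arbitrary nef $D$ follows from Corollary~3.2 of \cite{M}. This is a degree-one argument, not a comparison of Liouville bounds on subvarieties, and it is restricted to $P$ in the open orbit --- a restriction your sketch omits. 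If you want to salvage your approach, you must either prove Conjecture~\ref{conj:nearratcurve} for the singular subvarieties appearing in the induction (which no one can currently do) or replace that step by the intersection-number argument above.
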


        \begin{conjecture}[near-Coba conjecture]\label{conj:nearratcurve}
           Let $X$ be a smooth variety defined over a number field $k$, and $P\in X$ a $k$-rational point.  Let $A$ be an ample line bundle on $X$.  If $\alpha(P,A)<\infty$, then there is a curve $C$ such that for any Zariski dense sequence of $k$-rational points $\{x_i\}$, we have
            \[\alpha(P,A|_C)\leq\alpha(P,\{x_i\})\]
            In other words, $C$ approximates $P$ at least as well as any Zariski dense sequence of $k$-rational points.
        \end{conjecture}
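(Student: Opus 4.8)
The plan is to follow the strategy sketched in the introduction: produce a single explicit rational curve $C$ through $P$, bound $\alpha(P, A|_C)$ from above by an intersection number, bound $\alpha(P, \{x_i\})$ from below for every Zariski dense sequence by a Liouville-type inequality, and compare. Fix $X$ a wonderful compactification of adjoint type of a semisimple group $G$ over $k$, and $A$ a nef divisor class. First I would reduce to the case in which $A = \L$ is one of the finitely many generators of the nef cone of $X$: intersection with a fixed curve is linear in $A$, while the Liouville lower bound is superadditive in $A$ (a product of sections vanishing to orders $r$ and $r'$ vanishes to order $r+r'$), so inequalities $\L\cdot C \le \beta(\L)$ for the generators $\L$ yield $A\cdot C \le \beta(A)$ for every nonnegative combination, where $\beta(\cdot)$ denotes the Liouville bound discussed below. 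The degenerate subcases are harmless: if $A\cdot C = 0$ then $\alpha(P,A|_C)=0$, and if $P$ admits no Zariski dense approximating sequence then Conjecture~\ref{conj:nearratcurve} holds vacuously once any rational curve through $P$ is produced.

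For the lower bound I would apply the Liouville-type theorem \cite{MR}, which says that a nonzero section $s\in H^0(X, m\L)$ vanishing to order $\ge r$ at $P$ forces $\alpha(P,\{x_i\}) \ge r/m$ for every sequence of $k$-points converging $v$-adically to $P$ that is not eventually contained in $\{s=0\}$. For a Zariski dense sequence the zero locus of any nonzero section is irrelevant, so $\alpha(P,\{x_i\}) \ge \beta(\L)$, where $\beta(\L)$ is the supremum over $m\ge 1$ of $\tfrac{1}{m}$ times the largest order of vanishing at $P$ achieved by a nonzero section of $m\L$; since imposing vanishing of order $r$ at a smooth point of an $n$-fold is at most $\binom{r-1+n}{n}$ linear conditions, $\beta(\L)$ is bounded below by an explicit function of $\dim X$ and $h^0(X, m\L)$ (and, letting $m\to\infty$, by $(\L^{\dim X})^{1/\dim X}$). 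The arithmetic input required is thus the dimension of the space of sections of a generator. For the wonderful compactification of adjoint $G$ one has the $G\times G$-decomposition $H^0(X,\L_\lambda) = \bigoplus_\mu V(\mu)\otimes V(\mu)^{\ast}$, the sum over dominant weights $\mu$ with $\lambda-\mu$ a nonnegative integer combination of simple roots, whence $h^0(X,\L_\lambda) = \sum_\mu (\dim V(\mu))^2$; I would evaluate this for each nef-cone generator by the Weyl dimension formula, as recorded in Section~\ref{tables}.

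For the upper bound I would take $C$ to be the ``longest root curve'': the closure in $X$ of the image of the cocharacter $\tilde\alpha^{\vee}\colon \mathbb{G}_m \to T \subseteq G \subseteq X$ attached to the highest (longest) root $\tilde\alpha$. This is a rational curve defined over $k$, lying in the torus subvariety $\overline{T}\subseteq X$, passing through the base point of the open orbit and meeting two torus-fixed points of the boundary. One checks that the $(G\times G)(k)$-translates of $C$ cover $X(k)$ — so that after translating we may assume $C$ passes through $P$ with $P$ a smooth point of $C$ — and then \cite{MR} gives $\alpha(P,\L|_C) = \L\cdot C$. The number $\L\cdot C$ is a standard toric computation in $\overline{T}$: $C$ is the closure of a one-parameter subgroup whose line meets the interiors of the dominant and antidominant Weyl chambers, and $\L_\lambda|_{\overline{T}}$ has the $W$-invariantized support function of $\lambda$, so $\L_\lambda\cdot C$ is the difference of the values of that function on the two chambers, evaluated against $\tilde\alpha^{\vee}$; this is the finite list computed in Section~\ref{tables}.

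The proof then concludes by checking, type by type through the tables, that $\L\cdot C \le \beta(\L)$ for every generator $\L$ of the nef cone (for which it suffices, for instance, to verify $(\L\cdot C)^{\dim X}\le \L^{\dim X}$): this gives $\alpha(P,A|_C) = A\cdot C \le \beta(A) \le \alpha(P,\{x_i\})$ for all Zariski dense $\{x_i\}$, which is Conjecture~\ref{conj:nearratcurve} for $X$. In type $A_n$, a sharper form of the same comparison together with the Noetherian-induction reduction of Conjecture~\ref{conj:ratcurve} to Conjecture~\ref{conj:nearratcurve} mentioned above upgrades the conclusion to the full Coba conjecture. The hard part is precisely this final numerical comparison: the Liouville bound $\beta(\L)$ grows only like a $(\dim X)$-th root, while $\dim X = \dim G$ grows with the rank of $G$, so one must verify uniformly over the classification that the longest root curve really is of small enough degree — and it is exactly where this margin becomes thin that one is forced to settle for Conjecture~\ref{conj:nearratcurve} rather than Conjecture~\ref{conj:ratcurve} in general. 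Secondary obstacles are the uniform evaluation of $\sum_\mu(\dim V(\mu))^2$ and of $\L\cdot C$ across all Dynkin types, and checking that a suitable low-degree rational curve passes through the boundary points of $X$ and not merely through points of the open orbit.
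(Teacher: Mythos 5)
The statement you were given is Conjecture~\ref{conj:nearratcurve} itself: an arbitrary smooth variety $X$ over a number field, an arbitrary $k$-rational point $P$, and an arbitrary ample $A$. This is an open conjecture; the paper does not prove it and does not claim to. What the paper proves (Theorem~\ref{adjoint}) is the special case of wonderful compactifications of adjoint type with $D$ nef. Your opening move, ``Fix $X$ a wonderful compactification of adjoint type,'' silently replaces the statement by that special case, and every tool you then invoke (the identification of the nef cone with the dominant weights, $h^0(X,\L_\lambda)=\bigoplus_{\eta\prec\lambda}\End(V_\eta)$, the longest-root curve, the tables) is specific to it. So even if every subsequent step were airtight, you would have sketched a proof of the paper's main theorem, not of the statement; for general $X$ no argument is offered, and none is known.

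Judged as a sketch of that special case, your plan does follow the paper's strategy: Proposition~\ref{prop:lower-bound-alpha-Zar-dense} (the Liouville-type bound of \cite{MR}) for the lower bound on Zariski dense sequences, $\alpha(P,\L|_C)=\L\cdot C$ from \cite{MR} for the curve, and a reduction from nef-cone generators to arbitrary nef classes (your superadditivity argument plays the role of Corollary~3.2 of \cite{M} in the paper's Step 2). But two steps diverge in ways that would fail. First, your curve is the closure of the coroot torus $\tilde\alpha^\vee(\mathbb{G}_m)$ inside $\overline{T}$, whereas the paper's $C_\theta$ is the closure of the unipotent root group $\theta\colon\A^1\to G$ for the longest root. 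The toric width computation you describe gives the torus curve $\L_\lambda$-degree $\langle\lambda,\tilde\alpha^\vee\rangle+\langle\lambda^*,\tilde\alpha^\vee\rangle$, roughly twice the paper's $\langle\lambda,\theta^\vee\rangle$; already for $\L_{\omega_1}$ in type $A_n$ this degree is $2$ while $h^0(\L_{\omega_1})=(n+1)^2=\binom{\dim X+1}{\dim X}$ exactly (the weight is minuscule), so the strict inequality demanded by Proposition~\ref{prop:lower-bound-alpha-Zar-dense} fails, and the degree-one property that drives the stronger $A_n$/$C_n$ conclusion in Step 4 is lost. Second, the claim that $(G\times G)(k)$-translates of one fixed curve cover $X(k)$ is false at the boundary: translates of your curve meet $\partial X$ only in the strata containing its two limit points, so $k$-points of the other strata $W_S^\circ$ (for instance the closed orbit) lie on no translate. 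The paper needs a separate argument here (Step 3: degenerate the family $\eta(t)C_\theta$ to a curve through $Id_S$ with the same intersection numbers), and you flag this only as a ``secondary obstacle'' without resolving it. Finally, the alternative sufficient criterion $(\L\cdot C)^{\dim X}\le\L^{\dim X}$ you propose is neither verified nor what the tables contain; the paper's actual check is $\binom{\dim X-1+D_\omega\cdot C_\theta}{\dim X}\le\dim\End(V_\omega)$, read off from Tables~\ref{rootcurves} and~\ref{dimensioncounts}.
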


In both conjectures, the hypothesis $\alpha(P,A)<\infty$ is equivalent to $\alpha(P,L)<\infty$ for {\em any} ample divisor $L$, and to $\alpha(P,L)<\infty$ for {\em every} ample divisor $L$.  This is a consequence of the existence of two positive constants $C_1$ and $C_2$ such that $C_1L-A$ and $C_2A-L$ are effective.

We end this subsection with a result that gives lower bounds on the approximation constant of a Zariski dense sequence.

\begin{proposition}\label{prop:lower-bound-alpha-Zar-dense}
    
    Let $D$ be a divisor on an $n$-dimensional variety $X$ over a number field $K$.  If 
    \[h^0(D)>\left(\begin{array}{c} n+d-1 \\ n \end{array}\right),\]
    then for every smooth point $P\in X(K)$ and every Zariski dense sequence $\{x_i\}$ of rational points, we have
    \[
    \alpha(\{x_i\},P,D)\geq d.
    \]
\end{proposition}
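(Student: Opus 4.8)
The plan is to run a Liouville-type argument (the estimate underlying Theorem~3.3 of \cite{MR}): manufacture a single global section of $\O(D)$ vanishing to high order at $P$, and extract from it a uniform positive lower bound for $\dist(P,x_i)^{d}H_D(x_i)$ along the sequence, which immediately forces $\alpha(\{x_i\},P,D)\ge d$. Throughout I take $d$ to be a positive integer, which is the form in which the estimate is true and in which it is applied.

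First I would construct the section. Since $P$ is a smooth point of the $n$-dimensional variety $X$, the local ring $\O_{X,P}$ is regular of dimension $n$, so $\dim_K\O_{X,P}/\mathfrak{m}_P^{\,d}=\binom{n+d-1}{n}$. Trivializing $\O(D)$ in a neighbourhood of $P$, the conditions ``vanish to order $\ge d$ at $P$'' cut out a $K$-linear subspace of $H^0(X,\O(D))$ of codimension at most $\binom{n+d-1}{n}$. As $h^0(D)>\binom{n+d-1}{n}$ by hypothesis, there is a nonzero $s\in H^0(X,\O(D))$ with $\mathrm{ord}_P(s)\ge d$. Let $Z=\mathrm{div}(s)\subset X$; this is a proper Zariski-closed subset, so by Zariski density of $\{x_i\}$ all but finitely many $x_i$ lie off $Z$, i.e.\ $s(x_i)\ne 0$.

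Next comes the Liouville estimate. Fix adelic metrics $\{\|\cdot\|_w\}$ on $\O(D)$, normalized so that $H_D(x)=\prod_w\|s(x)\|_w^{-1}$ for every $x\in X(k)$ with $s(x)\ne 0$ (the right side is independent of the auxiliary section used to write it, by the product formula). Since $X$ is projective and $s$ is a global section, there are constants $C_w$, equal to $1$ for all but finitely many $w$, with $\|s(x)\|_w\le C_w$ for all $x$; and at the distinguished place $v$, working in local coordinates centred at the smooth point $P$ and using $\mathrm{ord}_P(s)\ge d$, one has $\|s(x)\|_v\le C'\dist(P,x)^{d}$ for $x$ sufficiently $v$-close to $P$. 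Multiplying over all $w$ gives $H_D(x)^{-1}=\prod_w\|s(x)\|_w\le C\,\dist(P,x)^{d}$, hence a constant $c>0$ with
\[
\dist(P,x_i)^{d}\,H_D(x_i)\ \ge\ c
\]
for all large $i$ (those with $s(x_i)\ne0$ and $x_i$ close to $P$).

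Finally I would conclude. If $\{x_i\}$ does not converge $v$-adically to $P$ then $A(\{x_i\},D)=\emptyset$ and $\alpha(\{x_i\},P,D)=\infty\ge d$, so we may assume $\dist(P,x_i)\to0$. For any $\gamma<d$,
\[
\dist(P,x_i)^{\gamma}H_D(x_i)=\dist(P,x_i)^{\gamma-d}\bigl(\dist(P,x_i)^{d}H_D(x_i)\bigr)\ \ge\ c\,\dist(P,x_i)^{\gamma-d}\ \longrightarrow\ \infty,
\]
so $\gamma\notin A(\{x_i\},D)$; therefore $\alpha(\{x_i\},P,D)=\inf A(\{x_i\},D)\ge d$. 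The only genuinely delicate step is the Liouville estimate itself — specifically the uniform bound $\|s(x)\|_w\le C_w$ with $\prod_w C_w<\infty$, and the local comparison of $\|s(x)\|_v$ with $\dist(P,x)^d$ near the smooth point $P$; both are standard (cf.\ \cite{MR}, Theorem~3.3), while everything else is formal once the section $s$ has been produced.
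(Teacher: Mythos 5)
Your proof takes essentially the same route as the paper's: the identical count $\binom{n+d-1}{n}$ of vanishing conditions at the smooth point $P$ produces a nonzero global section of $D$ vanishing to order at least $d$ there, after which the paper simply invokes the Liouville-type Theorem~3.3 of \cite{MR}, whose content you correctly sketch (uniform bounds at the places away from $v$, the local estimate $\|s(x)\|_v \ll \dist(P,x)^d$ near $P$, and the product formula). One small repair: Zariski density of $\{x_i\}$ only guarantees that infinitely many $x_i$ avoid $\mathrm{div}(s)$, not all but finitely many; this weaker statement is all you need, since unboundedness of $\dist(P,x_i)^{\gamma}H_D(x_i)$ along that subsequence already rules out every $\gamma<d$.
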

\begin{proof}
    A global section of $D$ vanishes to order $d$ at $P$ if and only if, in some (every) system of local coordinates, all the terms of degree at most $d-1$ are zero.  The space of monomials of degree at most $d-1$ in $n$ variables has dimension ${n+d-1\choose n}$.  The lower bound for $h^0(D)$ ensures that there will be a nonzero section of $D$ that vanishes at $P$ to order at least $d$.  The proposition then follows from the Liouville-type Theorem~3.3 of \cite{MR}.
\end{proof}

\begin{remark}
In fact, the Liouville-type theorem from \cite{MR} allows us to say somewhat more than this.  The lower bound for $\alpha(\{x_i\},P,D)$ holds for any sequence $\{x_i\}$ that does not eventually lie inside the (push-forward of) the stable base locus of the divisor $\pi^*D-dE$, where $E$ is the exceptional divisor of the blowup $\pi\colon\tilde{X}\to X$.  However, this stable base locus may be difficult to compute, so in this paper we will only use Proposition~\ref{prop:lower-bound-alpha-Zar-dense} as stated.
\end{remark}

    \subsection{Geometry of the wonderful compactification}\label{sec-geometry}

    Let $\tilde G$ be a simply connected simple group of rank $r$. We fix a maximal torus $T \subset \tilde G$ and a set of positive simple roots $\alpha_1 \ldots \alpha_r$.  This information determines a weight lattice $\Lambda = \textup{Hom}(T, \mathbb{G}_m)$ and root lattice $\mathcal{R} = \Z\{\alpha_1, \ldots, \alpha_r\} \subset \Lambda$, along with the monoid of dominant weights $\Lambda_+ \subset \Lambda$. We let $\omega_1, \ldots, \omega_r$ denote the fundamental dominant weights. Recall that weights are ordered by the dominant weight ordering: we say $\eta \prec \lambda$ if $\lambda - \eta$ is a positive sum of simple positive roots. The dominant weights index the irreducible representations of $\tilde G$, and we let $V_\lambda$ denote the irreducible corresponding to $\lambda \in \Lambda_+$. Taking dual representations gives rise to an involution $\lambda \to \lambda^*$, where $V_{\lambda^*} = V_\lambda^*$.

    The center $Z(\tilde G)$ can be identified with the quotient group $\Lambda/\mathcal{R}$. The quotient $\tilde G/Z(\tilde G)$ is the \emph{adjoint form} of $\tilde G$. The adjoint form is the focus of our main result, so we denote it by $G$.

    The \emph{wonderful compactification} $G \subset X$ is a smooth, projective variety with simple normal crossings boundary $\partial X = X \setminus G$. The action of $\tilde G\times \tilde G$ on $G$ extends to $X$, and the irreducible components $\partial X = W_1 \cup \cdots \cup W_r$ are the closures of the orbits of $\tilde G\times \tilde G$. Let $W_S = \bigcap_{i \in S} W_i$ for $S \subset [r]$.  Each $W_S$ is a $\tilde G\times \tilde G$ space with a dense, open orbit $W_S^\circ \subset W_S$.  Following \cite[2.2]{DeConciniProcesi}, the map from the maximal torus $T \to G$ extends to an embedding of $\A^r$ into $X$.  The intersection $\A^r \cap W_S^\circ$ is the open $T$ orbit in the intersection of the coordinate hyperplanes corresponding to $i \in S$.  In particular, any point in $W_S^\circ$ is in the $\tilde G\times \tilde G$ orbit of the distinguished point in $Id_S \in \A^r \cap W_S^\circ$ in this orbit.  

    The Picard group $\textup{Pic}(X)$ is isomorphic to the weight lattice $\Lambda$ (see \cite[Example 2.2.4]{Brion}). The effective cone $\textup{Eff}(X) \subset \textup{Pic}(X)\otimes \mathbb{Q}$ has extremal rays generated by the simple positive roots $\alpha_1, \ldots, \alpha_r$, while the nef cone $\textup{Nef}(X) \subset \textup{Pic}(X)\otimes \mathbb{Q}$ has extremal rays generated by the fundamental dominant weights $\omega_1, \ldots, \omega_r$. The monoid of nef line bundles is identified with $\Lambda_+$. Accordingly we let $\L_\lambda$ denote the bundle associated to the dominant weight $\lambda \in \Lambda_+$ (\cite[Example 2.3.5]{Brion}).
    
    The space of sections of $\L_\lambda$ has the following description as a $\tilde G\times \tilde G$ representation: \[H^0(X, \L_\lambda) = \bigoplus_{\eta \prec \lambda} \textup{End}(V_\eta).\] In particular, $H^0(X, \L_\lambda)$ has a distinguished summand $\textup{End}(V_\lambda)$ which defines a morphism $\phi_\lambda: X \to \P\textup{End}(V_{\lambda^*}).$ This map can be computed directly by taking the closure of the $\tilde{G} \times \tilde{G}$ orbit of the identity element of $\textup{End}(V_{\lambda^*})$. The geometry of these morphisms is explored in \cite{DeConciniProcesi}.

	\section{Main Theorem for wonderful compactifications of adjoint type}\label{mainthm}
	
	\begin{theorem}\label{adjoint}
		Let $X$ be the wonderful compactification of adjoint type of a semi-simple Lie group $G$, defined over a number field $k$ and let $D$ be a nef divisor on $X$.  Let $P$ be any $k$-rational point on $X$.  Then any sequence of best $D$-approximation to $P$ is not Zariski dense.

        If $G$ is of type $A_n$ or $C_n$ and $P$ is in the open orbit then the \ref{conj:ratcurve} is true for $X$.
	\end{theorem}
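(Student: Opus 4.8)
The plan is to follow the strategy outlined in the introduction: combine the Liouville-type lower bound of Proposition~\ref{prop:lower-bound-alpha-Zar-dense} for Zariski dense sequences with an explicit upper bound for $\alpha(P,D)$ coming from a well-chosen curve through $P$. Since both $\alpha(\cdot,\cdot,D)$ and the existence of curves of small anticanonical degree behave well under the $\tilde G\times\tilde G$ action, and since the nef cone is generated by $\omega_1,\dots,\omega_r$, it suffices to treat $D=\L_\lambda$ for $\lambda$ a fundamental weight (or more generally a dominant weight), reducing the intersection-theoretic bookkeeping to the tables in Section~\ref{tables}.

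First I would produce the approximating curve. On the wonderful compactification $X$ the torus embedding $\A^r\hookrightarrow X$ of \cite[2.2]{DeConciniProcesi} shows that through the distinguished point (and hence, by translating by $\tilde G\times\tilde G$, through any point of any orbit $W_S^\circ$) there pass rational curves of controlled degree; the natural choice is the ``longest root curve'', whose intersection numbers with each $\L_{\omega_i}$ are recorded in Section~\ref{tables}. Call this curve $C$ and set $d_i = C\cdot\L_{\omega_i}$. Using the one-dimensional theory of approximation constants on $\P^1$ (a rational curve through $P$ always admits a sequence of best approximation with $\alpha(P,A|_C)$ equal to the relevant ratio of the anticanonical degree to $A\cdot C$, as in \cite{M,MR}), I get an upper bound $\alpha(P,\L_\lambda)\le \alpha(P,\L_\lambda|_C)$ which, for $\lambda=\sum a_i\omega_i$, is a concrete rational number determined by the $d_i$ and by $-K_X\cdot C$.

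Second I would produce the lower bound for dense sequences. Using $H^0(X,\L_\lambda)=\bigoplus_{\eta\prec\lambda}\End(V_\eta)$, compute $h^0(\L_\lambda)=\sum_{\eta\prec\lambda}(\dim V_\eta)^2$ via Weyl's dimension formula; this is the content of the tables. Then Proposition~\ref{prop:lower-bound-alpha-Zar-dense} gives $\alpha(\{x_i\},P,\L_\lambda)\ge d$ for the largest integer $d$ with $h^0(\L_\lambda)>\binom{n+d-1}{n}$, where $n=\dim X = \dim G$. The first assertion of the theorem then follows once one checks, type by type using the tables, that this $d$ strictly exceeds the curve bound from the previous paragraph: a sequence of best $D$-approximation achieves $\alpha(P,D)\le\alpha(P,\L_\lambda|_C)<d$, so it cannot be Zariski dense. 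For the second assertion, in types $A_n$ and $C_n$ with $P$ in the open orbit, I would upgrade ``not Zariski dense'' to Conjecture~\ref{conj:ratcurve} by showing the best-approximation sequences must in fact accumulate on (a translate of) the explicit curve $C$: this uses the homogeneity of the open orbit to move $C$ to pass through $P$, together with a matching of the two bounds — i.e.\ verifying $\alpha(P,\L_\lambda|_C)$ exactly equals the lower bound $\alpha(P,\L_\lambda)$, so that $C$ is genuinely a curve of best approximation. In dimension-two slices or when the two bounds coincide this is immediate; in general one invokes that the near-Coba curve, once it is shown to carry a sequence matching $\alpha(P,\L_\lambda)$, is a curve of best approximation.

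The main obstacle I expect is the numerical comparison in the first step: one must verify \emph{uniformly across all Dynkin types} (for the ``not Zariski dense'' statement) and then \emph{sharply} in types $A_n$ and $C_n$ (for the full Coba conjecture) that the combinatorial quantity $d$ extracted from $\sum_{\eta\prec\lambda}(\dim V_\eta)^2$ versus $\binom{n+d-1}{n}$ really does dominate the curve ratio $\big(-K_X\cdot C\big)\big/\big(\L_\lambda\cdot C\big)$ — and, in the $A_n$/$C_n$ case, matches it. This is why Section~\ref{tables} is needed: the inequality is not formal but requires the explicit Weyl-dimension computations and the explicit intersection numbers of the longest root curve, and the delicate point is that these grow fast enough (the $h^0$ grows like a fixed power of $\lambda$ with the ``right'' exponent $n=\dim G$) to beat the linear-in-$\lambda$ curve degree by the required margin, with the open-orbit hypothesis in types $A_n$, $C_n$ ensuring the bound is attained rather than merely approached.
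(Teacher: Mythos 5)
Your overall strategy (Liouville-type lower bound for Zariski dense sequences versus an explicit upper bound along the longest root curve, verified through the tables) is the paper's strategy, but two of your key steps are wrong or missing. First, the quantity you compare against is not the right one: for a curve $C$ that is rational over $k$ and smooth at $P$, the approximation constant of the restricted divisor is $\alpha(P,D|_C)=D\cdot C$ (Theorem~2.8 of \cite{MR}); the anticanonical degree plays no role, and your proposed ``curve ratio'' $(-K_X\cdot C)/(\L_\lambda\cdot C)$ is not the approximation constant along $C$. The correct numerical check is that $h^0(\L_\omega)\geq\binom{\dim X+\L_\omega\cdot C_\theta-1}{\dim X}$, i.e.\ Proposition~\ref{prop:lower-bound-alpha-Zar-dense} applied with $d=\L_\omega\cdot C_\theta$, which is exactly what Tables~\ref{rootcurves} and~\ref{dimensioncounts} record; with your ratio the verification you outline would not prove the theorem. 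Two further gaps: the reduction from colours to an arbitrary nef $D$ needs the fact that the \emph{same} curve $C_\theta$ beats dense sequences for every colour simultaneously (Corollary~3.2 of \cite{M}), and your treatment of boundary points fails, because translating by $\tilde G\times\tilde G$ never moves the root curve (which lies in $G$) through a point of a boundary orbit $W_S^\circ$; the paper instead degenerates the family $\eta(t)C_\theta$ of translates to a cycle $C_{S,\theta}$ through $Id_S$ with the same intersection numbers. The semisimple (non-simple) case, handled in the paper by the product decomposition of the wonderful compactification, is also absent from your outline.

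Second, your upgrade to Conjecture~\ref{conj:ratcurve} in types $A_n$ and $C_n$ does not work as described. Showing that $\alpha(P,\L_\lambda|_C)$ ``matches'' the lower bound for dense sequences cannot yield a curve of best approximation: $\alpha(P,\L_\lambda)$ is an infimum over \emph{all} sequences, and the Liouville-type bound says nothing about sequences supported on proper subvarieties other than $C$, so a non-dense sequence could a priori approximate $P$ strictly better than $C$; nothing in your argument excludes this, and claiming that the near-Coba curve ``carries a sequence matching $\alpha(P,\L_\lambda)$'' is precisely what has to be proved. The paper's route is different: in types $A_n$ and $C_n$ the root curve has intersection number $1$ with every colour, so by Proposition~2.14(b) of \cite{MR} it is a curve of best approximation for each colour except possibly against sequences contracted by the associated morphism; since the colours are basepoint free and their morphisms are isomorphisms on the open orbit, no sequence converging to a point of the open orbit is contracted, and the statement for an arbitrary nef $D$ follows by combining the colours. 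Some such mechanism for controlling non-dense sequences is the missing idea in your proposal.
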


	\medskip

	\noindent
	{\it Proof:} \/  
    We begin by considering a simple group $G$.  We consider points in the dense, open orbit $G \subset X$.  Without loss of generality, we may assume that $P \in G$ is the identity element $Id \in G$. Let $\theta: \A^1 \to G$ be the one-parameter subgroup corresponding to the longest root, and let $C_\theta$ denote the closure of the action of $\theta$ on $Id$.  We show that we can find a sequence of best approximation on $C_\theta$ that approximates $Id$ at least as well as any Zariski dense sequence. We will use the generalization of Liouville's Theorem from \cite{MR} to give a lower bound for the approximation constant of any Zariski dense sequence, and then directly compute the approximation constant for a best sequence on $C_\theta$, finding that the latter is no greater than the former.

\vspace{1em}
    
	\noindent\emph{Step 1:~$D$ is a colour, $P\in G$, and $G$ is simple.} Assume first that $D$ is a colour.  Theorem~2.8 of \cite{MR} shows the approximation constant of a sequence of best approximation on $C_\theta$ is simply $D.C_\theta$ because $C_\theta$ is smooth at $Id$ and birational to $\P^1$ over $k$.  That is, we have:
    \[\alpha(P,D|_{C_\theta})=D.C_\theta\]
    Since $D=D_\omega$ corresponds to a fundamental dominant weight $\omega$, we may compute $D_\omega.C_\theta$ as $\langle \omega,\theta^\vee\rangle$. The second column of Table~\ref{rootcurves} lists the values of $D_\omega.C_\theta$; the multiple entries per row correspond to the different choices of fundamental dominant weight, ordered in the standard manner.
    
	What remains is to show that if $\{x_i\}$ is any Zariski dense sequence, then: 
    \[\alpha(\{x_i\},P,D)\geq \alpha(P,D|_{C_\theta})=D_\omega.C_\theta\]  
	By Proposition \ref{prop:lower-bound-alpha-Zar-dense}, we need only verify that
	\[\binom{\dim(X) - 1 + D_\omega \cdot C_\theta}{D_\omega \cdot C_\theta - 1} \leq h^0(D_\omega).\]
    In fact, since $h^0(D_\omega)\geq\dim\End(V_\omega)$, we verify the stronger statement that
    \[\binom{\dim(X) - 1 + D_\omega \cdot C_\theta}{D_\omega \cdot C_\theta - 1} \leq \dim\End(V_\omega).\]
    Table~\ref{rootcurves} lists these values for groups of simple type.  A comparison with the values of $\dim\End(V_\omega)$ in Table~\ref{dimensioncounts} shows that in every case, $\dim\End(V_\omega)$ is large enough to force the necessary order of vanishing, and often larger than necessary.  To pick an extreme example, notice that for the third fundamental weight of $E_8$, the dimension of $\dim\End(V_\omega)$ needs to be greater than 2573000 to conclude the result, but the actual dimension of $\End(V_\omega)$ is
	\[47596949737616581696\]
	{\em Thirteen} orders of magnitude separate those two numbers.  To put it another way, the approximation constant of a Zariski dense sequence is at least 11, while a sequence of best approximation along the root curve is a mere 4.  

    \vspace{1em}
    
	\noindent\emph{Step 2:~$D$ is nef, $P\in G$, and $G$ is simple.} 
        We have so far shown that if $D$ is a colour, then the root curve $C_\theta$ contains a sequence of smaller $D$-approximation to $P$ than that of any Zariski dense sequence.  Since the colours generate the nef cone, we can write an arbitrary nef divisor class $D$ as a non-negative linear combination of colours.  By Corollary~3.2 of \cite{M}, since the same curve $C_\theta$ approximates $P$ better than any dense sequence with respect to all the colours, it does the same with respect to $D$.  

    \vspace{1em}
    
	\noindent\emph{Step 3:~$D$ is nef, $P$ is arbitrary, and $G$ is simple.} 
    Now assume that $P$ lies on an open component of the boundary $W_S^\circ$ (see Section \ref{sec-geometry}).  Using the $G\times G$ action, without loss of generality we may assume that $P = Id_S$ for some $S \subset [r]$.  We choose a one-parameter subgroup $\eta: \mathbb{G}_m \to T$ such that the limit of the action on $Id$ is the point $Id_S$.  We consider the map $\tilde{\eta}: \A^1\times \P^1 \to X$ defined by $\{\eta(t)C_\theta\}_{t\in\A^1}$. This is a one-parameter family of rational curves parameterized by $\A^1$.  When $t=0$, by definition of $\eta$ the resulting curve $C_{S,\theta}$ is the closure of the action of the longest root group for $\theta$ through $Id_S$. Since all the curves in the family have the same intersection properties, we may again deduce from Tables \ref{rootcurves} and \ref{dimensioncounts} that the $1$-cycle $C_{S,\theta}$ has small enough degree that the component through $Id_S$ has a smaller $L$-approximation constant than any Zariski dense sequence, for any nef $L$.  This concludes the argument in the case that $G$ is simple.

    \vspace{1em}
    
	\noindent\emph{Step 4:~full conjecture for types $A_n$ and $C_n$.} If $G$ has type $A_n$ or $C_n$, then the root curve $C_\theta$ has intersection number 1 with every colour.  By Proposition 2.14 (b) of \cite{MR}, this means that $C_\theta$ is a curve of best approximation for every colour, except possibly for sequences contracted by that colour.  Since the colours generate the nef cone, we deduce that $C_\theta$ is a curve of best approximation for an arbitrary nef divisor $D$, except possibly for sequences contracted by some colour.  Since the colours are basepoint free and their associated morphisms are isomorphisms on the open orbit, we conclude that Conjecture \ref{conj:ratcurve} is true for $P$ with respect to an arbitrary nef divisor $D$.

    \vspace{1em}
    
	\noindent\emph{Step 5:~$D$ is nef, $P$ is arbitrary, and $G$ is semisimple.} Finally, if $G$ is semisimple of adjoint type, we can write $G=G_1\times\ldots\times G_r$ for simple groups $G_i$ of adjoint type.  The wonderful compactification of $G$ is the product of the wonderful compactifications of the $G_i$.  We already know that the point $\mbox{Id}_1$ on $G_1$ is approximated by a sequence $\{x_i\}$ on the curve $C_{\theta_1}$ better than any Zariski dense sequence.  It is therefore trivially true that the sequence $\{(x_i,P_2,\ldots,P_r)\}$ approximates the point $P=(P_1,\ldots,P_r)$ better than any Zariski dense sequence on $X$.  \qed
    
	\section{Tables}\label{tables}

	\FloatBarrier
	
	\begin{center}
		\begin{table}[H]\Large
			\begin{tabular}{ |c|c| c | c|} \hline
				$G$ & $D_\omega \cdot C_\theta = \langle \omega, \theta^\vee \rangle$ & $\binom{\dim(X) - 1 + D_\omega \cdot C_\theta}{\dim(X)}$ \\ 
				\hline
				$A_n$ & 1, \ldots, 1 & 1,\ldots,1\\ 
				\hline
				
				$B_n$ & 1, 2,\ldots, 2, 1 & $1, n(2n+1),\ldots,n(2n+1),1$ \\ 
				\hline
				
				$C_n$ &  1, \ldots, 1 & 1,\ldots,1 \\ 
				\hline
				
				$D_n$ &  1, 2, \ldots, 2, 1, 1 & $1, n(2n-1),\ldots,n(2n-1), 1,1$\\ 
				\hline
				
				$E_6$ &  1, 2, 3, 2, 1, 2 & 1, 78, 3081, 78, 1, 78 \\ 
				
				\hline
				
				$E_7$ &  1, 2, 3, 4, 3, 2, 2 & 1, 133, 8911, 400995, 8911, 133, 133\\ 
				
				\hline
				
				$E_8$ &  2,3,4,5,6,4,2,3 & 248, 30876, 2573000, 161455750, \\
                & & 8137369800, 2573000, 248, 30876 \\ 
				
				\hline
				
				$F_4$ &  2,3,2,1 & 52, 1378, 52, 1\\ 
				
				\hline
				
				$G_2$ &  1,2 & 1, 14 \\ 
				
				\hline
				
			\end{tabular}\vspace*{.1in}
			\caption{Intersection numbers of the root curve}\label{rootcurves}
		\end{table}
	\end{center}
    
	\begin{center}
		\begin{table}[H]\Large
			\begin{tabular}{ |c|c| c| } \hline
				$G$ & $\dim(X)$ & $\dim\End(V_\omega) \leq h^0(X, D_\omega)$ \\ 
				\hline
				$A_n$ & $n(n + 2)$ & $(n+1)^2, \ldots, \binom{n+1}{k}^2, \ldots, (n+1)^2$ \\ 
				\hline
				
				$B_n$ & $n(2n+1)$ & $(2n+1)^2, \ldots, \binom{2n+1}{k}^2, \ldots, \binom{2n+1}{n}^2$ \\ 
				\hline
				
				$C_n$ & $n(2n+1)$ & $(2n)^2, \ldots, (\binom{2n}{k} - \binom{2n}{k-2})^2, \ldots, (\binom{2n}{n} - \binom{2n}{n-2})^2$ \\ 
				\hline
				
				$D_n$ & $n(2n-1)$ & $(2n)^2, \ldots, \binom{2n}{k}^2, \ldots, \binom{2n}{n-1}^2, (\frac{1}{2}\binom{2n}{n})^2$ \\ 
				\hline
				
				$E_6$ & $78$ & $27^2, 351^2, 2925^2, 352^2, 27^2, 78^2$ \\ 
				
				\hline
				
				$E_7$ & $133$ & $133^2, 8645^2, 365750^2, 27664^2, 1539^2, 56^2, 912^2$ \\ 
				
				\hline
				
				$E_8$ & $248$ & $3875^2, 6696000^2, 6899054264^2, 146325270^2,$ \\
                & & $2450240^2, 30380^2, 248^2, 147250^2$ \\ 
				
				\hline
				
				$F_4$ & $52$ & $26^2, 52^2, 273^2, 1274^2$ \\ 
				
				\hline
				
				$G_2$ & $14$ & $7^2, 14^2$ \\ 
				
				\hline
				
			\end{tabular}\vspace*{.1in}
			\caption{Dimension counts}\label{dimensioncounts}
		\end{table}
	\end{center}
	
	\normalsize

	\bibliographystyle{alpha}
	\bibliography{wonderful-07-04-25}

\begin{thebibliography}{DCP83}

\bibitem[Bri07]{Brion}
Michel Brion.
\newblock The total coordinate ring of a wonderful variety.
\newblock {\em J. Algebra}, 313(1):61--99, 2007.

\bibitem[Cn19]{Ca}
Diana Casta\~neda.
\newblock Rational approximations on smooth rational surfaces.
\newblock {\em PhD thesis, University of Waterloo}, 2019.

\bibitem[DCP83]{DeConciniProcesi}
C.~De~Concini and C.~Procesi.
\newblock Complete symmetric varieties.
\newblock In {\em Invariant theory ({M}ontecatini, 1982)}, volume 996 of {\em
  Lecture Notes in Math.}, pages 1--44. Springer, Berlin, 1983.

\bibitem[LMS25]{LMS}
Brian Lehmann, David McKinnon, and Matthew Satriano.
\newblock Approximating rational points on surfaces.
\newblock {\em Proc. Amer. Math. Soc.}, 153(5):1903--1915, 2025.

\bibitem[McK07]{M}
David McKinnon.
\newblock A conjecture on rational approximations to rational points.
\newblock {\em J. Algebraic Geom.}, 16(2):257--303, 2007.

\bibitem[McK23]{MC}
David McKinnon.
\newblock Rational approximation on cubic hypersurfaces.
\newblock {\em arXiv}, 2023.

\bibitem[MR16]{MR}
David McKinnon and Mike Roth.
\newblock An analogue of {L}iouville's theorem and an application to cubic
  surfaces.
\newblock {\em Eur. J. Math.}, 2(4):929--959, 2016.

\bibitem[MS21]{MS}
David McKinnon and Matthew Satriano.
\newblock Approximating rational points on toric varieties.
\newblock {\em Trans. Amer. Math. Soc.}, 374(5):3557--3577, 2021.

\bibitem[MS23]{MoSat}
Sean Monahan and Matthew Satriano.
\newblock Approximating rational points on horospherical varieties.
\newblock {\em arXiv}, 2023.

\end{thebibliography}

\end{document}